\journalname{}
\numberwithin{equation}{section}
\newtheorem{maintheo}{\bf Main Theorem}
\newtheorem{uniqtheo}{Uniqueness Theorem}
\newtheorem{theoA}{\bf Theorem A}
\newtheorem{theoB}{\bf Theorem B}
\newcommand{\const}{{\rm const}}
\renewcommand{\Re}{\rm Re \,}
\newcommand{\dd}{\,{\rm d}}
\newcommand{\RR}{\mathbb R}
\newcommand{\CC}{\mathbb C}
\newcommand{\NN}{\mathbb N}
\newcommand{\DD}{\mathbb D}
\newcommand{\rad}{{\rm rad}}
\newcommand{\supp}{{\rm supp}}
\DeclareMathOperator{\Zero}{Zero}
\DeclareMathOperator{\sbh}{sbh}
\DeclareMathOperator{\Int}{int}
\DeclareMathOperator{\clos}{clos}
\DeclareMathOperator{\Har}{har}
\DeclareMathOperator{\Hol}{Hol}
\DeclareMathOperator{\dsbh}{\text{$\delta${\rm -sbh}}}
\DeclareMathOperator{\rtrc}{\text{$\rho${\rm -trc}}}
\DeclareMathOperator{\trc}{trc}
\DeclareMathOperator{\Meas}{Meas}
\begin{document}

\title{Zeros of holomorphic functions
in the unit disk\\ and 
$\rho$-trigonometrically convex functions \thanks{The work was supported by a grant of the Russian Science Foundation (project no. 18-11-00002, first author), by grants of the Russian Foundation of Basic Research (projects no.~16-01-00024, 18-51-06002, second author)
}
}
\author{Bulat~N.~Khabibullin
\and Farkhat~B.~Khabibullin
}


\institute{Bulat N. Khabibullin
\at
Department of Mathematics and IT, Bashkir State University, Ufa, Russia 
\\
\email{khabib-bulat@mail.ru}           
\\
Farkhat B. Khabibullin
\\
\email{khabibullinfb@mail.ru}
}

\date{Received: date / Accepted: date}

\maketitle

\begin{abstract}
Let $M$\/ be a subharmonic function with Riesz measure $\mu_M$ on 
the unit disk $\mathbb D$ in the complex plane $\mathbb C$. Let $f$ be a nonzero holomorphic function on $\mathbb D$ such that  $f$ vanishes on ${\sf Z}\subset \mathbb D$, 
 and satisfies $|f| \leq \exp M$ on $\mathbb D$.  Then restrictions on the growth of
$\mu_M$ near the boundary of $D$ imply certain restrictions on the distribution of $\sf Z$.
We give a quantitative study of this phenomenon in terms of special non-radial  test functions constructed using $\rho$-trigonometrically convex functions.
\keywords{Holomorphic function \and zero set \and subharmonic function \and Riesz measure \and uniqueness theorem  \and $\rho$-trigonometrically convex function }
\subclass{Primary: 30C15 \and 31A05; Secondary: 31A15}
\end{abstract}

\section{Introduction}
\label{intro}
We use our results  \cite{Kha91}--\cite{KhT17} on zero subsets of holomorphic functions of one and several variables. The origins of our research including previous results of other authors are also described in sufficient detail in \cite{Kha91}--\cite{KhT17}. Earlier $\rho$-trigonometrically  convex and its multidimensional versions, $\rho$-subspherical functions,  were used to study zero sets of entire functions with constraints on their growth in {\it the complex plane\/} $\CC$ \cite[\S~4]{Kha91}, \cite[Theorem 3.3.5]{Kh12} and in $\CC^n$ \cite[4.2, Theorem 4.2.7]{Kh12}, $1<n\in \NN:=\{1,2,\dots\}$, of holomorphic functions on the punctured complex plane $\CC_*:=\CC\setminus \{0\}$ \cite[5.1.6]{KhAR18}, as well as to study the distributions of Riesz measures 
$\mu_u:=\frac{1}{2\pi}\Delta u$ for subharmonic functions $u$ with constraints on their growth in $\CC$, and in $\RR^m$ \cite[\S\S~2--3]{Kha91}, $2< m\in \NN$, where $\RR$ is {\it the real line\/} in $\CC$, and $\Delta$ is the {\it Laplace operator\/} acting in the sense of the theory of distributions. We use $\rho$-trigonometrically convex functions to  study of zero sequences of holomorphic functions with restrictions to their growth in {\it the unit disk}   
\begin{subequations}\label{DS}
\begin{align}
\DD&:=\{z=re^{i\theta}\colon 0\leq r<1,\; \theta \in \RR \}\subset \CC;
\tag{\ref{DS}d}\label{DSd}\\
\RR_{+\infty}&=\RR\cup \{+\infty \}, \quad \RR_{\pm \infty}:=
\RR_{-\infty}\cup \RR_{+\infty}, \quad
\RR_*:=\RR\setminus \{0\},
\tag{\ref{DS}r}\label{DSr}
\\
\RR^+&:=\{x\in \RR \colon x\geq 0\}\subset \CC, \quad  
\RR_*^+:=\RR^+\setminus \{0\}.
\tag{\ref{DS}+}\label{DS+}
\end{align}
\end{subequations} 
Let $S\subset \CC$. By $\Hol(S)$, $\Har(S)$, $\sbh (S)$, $\dsbh(S):=\sbh (S)-\sbh (S)$, $C^k(S)$ for $k\in \NN\cup \{\infty\}$, 
we denote resp. the classes of {\it holomorphic,\/ harmonic, subharmonic, $\delta$-subharmonic\/ {\rm \cite[3.1]{KhR18}},  
continuously  $k$ times differentiable\/} functions $u$ on an open set $\mathcal O_u\supset S$. But $C(S)$ is the class of all continuous functions on $S$. We denote the function identically equal to resp. 
$-\infty$ or $+\infty$ on $S$ by the same symbols $-\infty$ or $+\infty$, and
\begin{equation*}
\sbh_*(S):=\sbh(S)\setminus \{\boldsymbol{-\infty} \}, \; 
\dsbh_*(S):=\dsbh(S)\setminus \{\boldsymbol{\pm\infty} \},  
\; \Hol_*(S):=\Hol(S)\setminus \{0\},
\end{equation*}
 where the symbol $0$ is used to denote the number zero, the origin, zero vector, zero function, zero measure, etc.
The {\it positiveness\/} is everywhere understood as $\geq 0$ according to the context.

\begin{definition}[{\rm \cite[{\rm 8.1}]{Levin96_1}, \cite{GM}}]\label{trc}
Let $\rho \in \RR_*^+$. A $2\pi$-periodic  function 
$h\colon \RR \to \RR$ is called a {\it $\rho$-trigonometrically convex function\/}  if 
\begin{equation}\label{dftrc}
h(\theta)\leq \frac{\sin \rho (\theta_2-\theta)}{\sin \rho (\theta_2-\theta_1)}h(\theta_1)+\frac{\sin \rho (\theta-\theta_1)}{\sin \rho (\theta_2-\theta_1)}h(\theta_2)
\quad\text{\it for all $\theta \in (\theta_1,\theta_2)$}
\end{equation}
and {\it for all\/ $\theta_1,\theta_2\in \RR$ such that\/} $0<\theta_2-\theta_1<\pi/\rho$. A function $h\colon \RR \to \RR$ is a $0$-trigonometrically convex function if 
$h\equiv \const \in \RR$. Further, the class of all $2\pi$-periodic  $\rho$-trigonometrically convex function on $\RR$ is denoted as $\rtrc$, 
\begin{equation}\label{trc+}
\rtrc^+:=\{h\in \rtrc \colon h\geq 0 \;\text{on $\RR$} \}, \; \trc:=\bigcup_{\rho\in \RR^+}\rtrc, \; \trc^+:=\bigcup_{\rho\in \RR^+}\rtrc^+.
\end{equation}
\end{definition}
 We recall some properties of $2\pi$-periodic  $\rho$-trigonometrically convex functions 
that can be found in the works \cite{Levin56},  \cite{Levin96_1}, \cite{Leontev}, \cite{GM}, \cite{Maergoiz}, \cite{Djrbashian}. 
\begin{enumerate}[{\rm (i)}]
\item\label{tr:i} If  $h\in \trc$, then $h\in C(\RR)$. 
\item\label{tr:ii}  If $h\in \rtrc$, then  $h^+:=\max\{0,h\}\in \rtrc^+$. 
\item\label{tr:iii} Let  $h\in C^{2}(\RR)$ be a $2\pi$-periodic function. 
$h\in \rtrc$ if and only if \begin{equation}\label{C2tr}
h''(\theta)+\rho^2h(\theta)\geq 0 \quad\text{for all  $\theta\in \RR$.} 
\end{equation}
\item\label{tr:iv} A $2\pi$-periodic continuous function $h$ belongs to the class $\rtrc$ if and only if $h''+\rho^2h\geq 0$ in the sense of the distribution theory.
 \item\label{tr:v}    A $2\pi$-periodic continuous function $h$ belongs to the class $\rtrc$  if and only if the function $z=re^{i\theta}\mapsto h(\theta)r^{\rho }$ is subharmonic on $\CC$. 
\item\label{tr:vi}  If $h\in \rtrc^+$ and $\rho\leq \rho'\in \RR^+$, then $h\in \rho'\text{-}\trc^+$, i.,e., $\rtrc^+ \subset \rho'\text{-}\trc^+$.
\item\label{tr:vii} If a sequence of functions $h_n\in \rtrc^+$, $n\in \NN$ is decreasing, then the function $h:=\lim_{n\to \infty}h_n$ belongs to the same class  $\rtrc^+$.
\end{enumerate}

\begin{example}  Let $\rho\in \RR^+$. The $2\pi$-periodic continuation of the function 
\begin{equation*}
h(\theta):=\begin{cases}
\cos \rho \theta,\quad &\text{if $|\theta|<\frac{\pi}{2\rho}$},\\
0,\quad &\text{if $|\theta|\geq\frac{\pi}{2\rho}$},
\end{cases}
\qquad \theta\in (-\pi,\pi], 
\end{equation*}  
belong to the class $\rtrc^+$.
\end{example}
\begin{example}  Let $S\subset \CC$ be a bounded subset. Then the support  function 
$k_S(\theta):=\sup_{s\in S} {\Re} (se^{-i\theta}) $
of $S$ belongs to the class $1\text{-}\trc$. If $0\in S$, then $k_S\in 1\text{-}\trc^+$. For $\rho\in \RR^+$, the $\rho$-support function of $\rho$-convex domain $S\subset \CC$ belongs to the class $\rtrc$, and this 
$\rho$-support function belongs to the class $\rtrc^+$, when
$0\in S$  \cite[Ch.~VI, 2.2]{Djrbashian}, \cite[Ch.~II, \S~3]{Leontev}, \cite[\S~9]{Maergoiz}.  
\end{example}

\begin{example}  Let $u$ be a subharmonic function on $\CC$, and $\limsup_{z\to \infty}\frac{u(z)}{|z|^{\rho}}<+\infty$.  Then its $\rho$-indicator function 
\begin{equation*}
h(\theta):=\limsup_{r\to +\infty}\frac{u(re^{i\theta})}{r^{\rho}}\, , \quad \theta\in \RR,
\end{equation*}
belongs to the class $\rtrc$. See   \cite{Levin56},  \cite{Levin96_1}, \cite{Leontev}, \cite{GM}, \cite{Maergoiz} for $u:=\log |f|$ with an entire function $f$ on $\CC$.  
\end{example}

Let $\mathcal O\subset \CC$ be an open subset, and let 
\begin{equation}\label{dfZ}
{\sf Z}:=\{{\sf z}_k\}_{k=1,2,\dots}, \quad {\sf z}_k:=r_ke^{i\theta_{k}}\in \mathcal O, \quad r_k:=|{\sf z}_k|\in \RR^+, \; \theta_k\in \arg {\sf z}_k \subset \RR,
\end{equation} 
be {\it a sequence on\/ $\mathcal O$ without limit points in\/ $\mathcal O$.} Some points ${\sf z}_k$ can repeat. It is also
possible that ${\sf Z}=\varnothing$ is empty.  We associate with each sequence $\sf Z$ the integer-valued
positive {\it counting measure\/} $n_{\sf Z}$ on $\mathcal O$ by setting
\begin{equation}\label{nZ}
n_{\sf Z}(S) := \sum_{{\sf z}_k\in S} 1,\quad  S\subset \mathcal O;
\end{equation}
$n_{\sf Z}(S)$ is the number of points ${\sf z}_k$ lying in $S$. We denote by the same symbol as
the sequence $\sf Z$ the function ${\sf Z}\colon z\mapsto n_{\sf Z}\bigl(\{z\}\bigr)$, $z\in \mathcal O$, the {\it divisor\/} of the sequence $\sf Z$. In particular, we have $\supp\, {\sf Z}:=\supp\, n_{\sf Z}$ for the {\it support\/} $\supp$; ${\sf Z}\subset D$  means  that $\supp\, {\sf Z}\subset D$;
$z\in {\sf Z}$ (resp., $z\notin {\sf Z}$) means the same as $z\in \supp\,{\sf Z}$ (resp., as $z\notin \supp\,{\sf Z}$).

Departing from the usual treatment of a sequence as a function of an integer or a positive integer variable we say that a sequence $\sf Z$ coincides with a sequence $\sf Z'$ or that they are equal (we write ${\sf Z}={\sf Z'}$) if for the associated divisors
we have ${\sf Z} (z)\equiv {\sf Z'}(z)$ for all $z \in \mathcal O$. In other words, we regard a point sequence as
a representative of the equivalence class containing the sequences in 
$\mathcal O$ with equal divisors. An embedding ${\sf Z}\subset {\sf Z'}$ means that ${\sf Z} (z) \leq {\sf Z'} (z)$ for all $z\in \mathcal O$. See \cite{Kh12} in detail.

We denote by $\Zero_f$ the {\it zero sequence\/} of the
function $f \in \Hol_*(\mathcal O)$ in $\mathcal O$ numbered with multiplicities taken into account. Then \cite[Theorem 3.7.8]{Rans}
\begin{equation}\label{muf}
n_{\Zero_f}\overset{\eqref{nZ}}{=}\frac{1}{2\pi} \Delta \log |f| 
\end{equation} 
is the Riesz measure of function $\log |f| \in \sbh_*(\mathcal O)$.

A function $f \in \Hol_*(\mathcal O)$ {\it vanishes on\/} $\sf Z$ if ${\sf Z} \subset 
\Zero_f$ (we write $f({\sf Z}) = 0$). The function $0\in \Hol(\mathcal O)$
vanishes on any sequence ${\sf Z}\subset \mathcal O$.
  
For $r\in \RR^+$ and $z\in \CC$, we set $D(z,r) := \bigl\{
z'\in \CC\colon|z' - z| < r\bigr\}$
(i.e., $D(z,r)$ is an open disk of radius $r$ centered at $z$), 
$D(r) := D(0,r)$;  $\overline D(z,r) := \bigl\{
z'\in \CC\colon|z' - z| \leq r\bigr\}$
(i.e., $D(z,r)$ is a closed disk of radius $r$ centered at $z$), 
$\overline D(r) := \overline D(0,r)$, $\overline D(0):=\{0\}$. 

The class of all Borel real measures, i.e., {\it charges,\/} on a Borel subset  $S\subset \CC$ is denoted by $\Meas (S)$,  and $\Meas^+(S) \subset \Meas(S)$ is the subclass of all positive {\it measures.}    For a charge 
$\mu \in \Meas(S)$, we let $\mu^+$, $\mu^-$ and $|\mu| := \mu^+ + \mu^-$ resp.
 denote its {\it upper, lower,\/} and {\it total}\/ variations.

Let $h\colon \RR\to \RR$ be a bounded  $2\pi$-periodic Borel function on $\RR$;  $\mu\in \Meas (\DD)$.  We define the {\it radial counting function\/}  $\mu^{\rad}(\cdot ;h)$ of charge $\mu$ with weight $h$ on $[0,1)$ as  
\cite[(3.1)]{Kha91}, \cite[(0.2)]{Kha99}
\begin{equation}\label{ntk}
\mu^{\rad}(r ;h):=\int_{\overline D(r)} h(\arg z) \dd \mu(z), \quad r\in [0,1).
\end{equation}
In particular, the function $\mu^{\rad}(r):=\mu^{\rad}(r ;1)$ with weight $h\equiv 1$ is the classical radial counting function of  $\mu$. If  $\sf Z$ is a sequence in  $\mathcal O\overset{\eqref{dfZ}}{:=}\DD$ then
\cite{GS}, \cite[(0.4)]{Kha91},  \cite[(0.2)]{Kha99}
\begin{equation}\label{Ztf}
n_{\sf Z}^{\rad}(r ;h)\overset{\eqref{nZ}}{:=}\sum_{|{\sf z}_k |\leq r} h(\arg {\sf z}_k), \quad r\in [0,1).
\end{equation}
Here and below, a reference mark over a symbol of (in)equality, inclusion, or more general binary relation, etc. means that this relation is somehow related to this reference. For $-\infty\leq r<R\leq +\infty$  always 
\begin{equation}\label{irR}
\int_r^R\ldots :=\int_{(r,R)} \ldots \; .
\end{equation} 

 A particular result of our investigation is the following
\begin{uniqtheo}
Let $M\in \sbh_*(\DD)$ be a subharmonic function with Riesz measure   $\mu_M:=\frac{1}{2\pi} \Delta M\in \Meas^+(\DD)$, and let ${\sf Z}\overset{\eqref{dfZ}}{=}\{r_ke^{i\theta_k}\}_{k=1,2,\dots} \subset\DD\overset{\eqref{dfZ}}{=}:\mathcal O$
be a sequence , $h\in \trc^+$,  and $g\colon \RR^+\to \RR^+$ be a convex function with  $g(0)=0$. 
 If  a function $f\in \Hol(\DD)$ vanishes on ${\sf Z}$, 
satisfies the inequality 
$|f|\leq \exp M$ on $\DD$,  and 
\begin{subequations}\label{cu}
\begin{align}
\int_{1/2}^{1}  g\bigl(2(1-t)\bigr) \dd {\mu}_M^{\rad}(t;h)\overset{\eqref{ntk}}{<}+\infty, 
\tag{\ref{cu}M}\label{cuM}
\\ \intertext{but}
\sum_{1/2< r_k<1} g(1-r_k) h(\theta_k)\overset{\eqref{dfZ}}=+\infty,
\tag{\ref{cu}Z}\label{cuZ}
\end{align}
\end{subequations}
then $f$ is the zero function, i.\,e., $f\equiv 0$ on\/ $\DD$.
\end{uniqtheo}
In the case $M = 0$ with  $\mu_M=0$, $g(x)\equiv x$, $x\in \RR^+$, and $h\equiv 1\in 0\text{-}\trc^+$, 
the condition \eqref{cuZ} contradicts the classical Blaschke condition $\sum_{k} (1-r_k)<+\infty$. 
So, the Nevanlinna theorem on the distribution of zeros of bounded holomorphic functions  shows that 
our Uniqueness Theorem is accurate in this case.

By $\const_{a_1,a_2,\dots} \in \RR$ we denote constants that, in general, depend on $a_1,a_2,\dots$ and, unless
otherwise specified, only on them; $\const^+_{\dots} \geq 0$. 

\begin{maintheo} Let $M\in \dsbh_*(\DD)$ and $u\in \sbh_*(\DD)$  are functions, resp., with Riesz charge $\mu_M:=\frac{1}{2\pi}\Delta M\in \Meas (\DD)$ and with Riesz measure $\mu_u:=\frac{1}{2\pi}\Delta u\in \Meas^+(\DD)$. Let  $\rho\in \RR^+$. If $u\leq M$ on $\DD$, then there exists a constant $C:=\const^+_{\rho, M, u}\geq 0$ such that the inequality 
\begin{equation}\label{uM}
\int_{1/2}^{1} g\Bigl(\frac{1-t}{t}\Bigr) \dd \mu^{\rad}_u(t;h)\overset{\eqref{ntk}}{\leq} 
\int_{1/2}^{1} g\Bigl(\frac{1-t}{t}\Bigr) \dd \mu^{\rad}_M(t;h)+C
\end{equation}
 holds for any
\begin{enumerate}
\item[{\rm [g]}]  convex function $g\colon \RR^+\to \RR^+$ with $g(0)=0$ and $g(1)\leq 1$,
\item[{\rm [h]}] $2\pi$-periodic $\rho$-trigonometrically  convex  function $h\colon 
\RR\to [0,1]$.
\end{enumerate}
In particular, if\/ ${\sf Z}$ is a sequence   from\/ \eqref{dfZ} with $\mathcal O:=\DD$, and there exists a function 
$f\in \Hol_*(\DD)$, $f({\sf Z})=0$, satisfying the inequality\/ $|f|\leq \exp M$ on $\DD$,  then there is a constant $C:=\const_{\rho,M,{\sf Z}}$ such that  
\begin{equation}\label{uMf}
\sum_{1/2< r_k<1} g\Bigl(\frac{1-r_k}{r_k}\Bigr) h(\theta_k) \leq 
\int_{1/2}^{1} g\Bigl(\frac{1-t}{t}\Bigr) \dd \mu^{\rad}_M(t;h)+C\quad
\text{for any\/ {\rm [g]}--{\rm [h]}.}
\end{equation}
\end{maintheo}
The cases $u=M$ and $M=\log |f|$, ${\sf Z}=\Zero_f$, show that the inequalities  \eqref{uM} and \eqref{uMf} uniform with respect to [h]--[g]  are optimal up to an additive constant $C$.

\section{Subharmonic test functions and their role}\label{ssec1_2} 

By $\CC_{\infty}:=\CC\cup  {\infty}$ we denote the one-point Alexandroff compactification of $\CC$.  
 For a subset $S \subset \CC_{\infty}$,  $\clos S$, $\Int S$, and $\partial S$ are 
{\it the closure,the  interior, and the boundary\/} of $S$ in $\CC_{\infty}$. 
A (sub)domain in $\CC_{\infty}$ is an open connected
subset in $\CC_{\infty}$. Let $S_0\subset S\subset \mathbb C_{\infty}$. If  the  closure $\clos\, S_0$ is a compact subset of $S$ in the topology induced on $S$  from $\mathbb \CC_{\infty}$, then the set $S_0$ is the {\it relatively compact subset} of $S$, and we write $S_0\Subset S$.  Let 
\begin{equation}\label{SD0D}
\varnothing \neq S\Subset D\subset \CC_{\infty},\quad\text{where $D\neq \CC_{\infty}$ is\/ {\it domain}.} 
\end{equation} 
For a function $v\colon D\setminus S\to \RR$ we write 
\begin{equation}\label{dD}
\lim_{\partial D}v=0,\quad\text{\it if\/ 
$\lim_{D\ni z'\to z}v(z')=0$ for all\/ $z\in \partial D$.}  
\end{equation}
By definition, put
\begin{subequations}\label{sbh}
\begin{align}
\sbh_0(D\setminus S) &:=\Bigl\{v\in \sbh(D\setminus S)\colon 
 \lim_{\partial D}v\overset{\eqref{dD}}{:=}0\Bigr\},
\tag{\ref{sbh}o}\label{sbh0}\\
\sbh_0^+(D\setminus S)&\overset{\eqref{sbh0}}{:=}\bigl\{v\in \sbh_0(D\setminus S)\colon v\geq 0\text{ on }D \bigr\}. 
\tag{\ref{sbh}+}\label{sbh0+}
\end{align}
\end{subequations}

\begin{definition}[{\rm \cite[Definition 1]{KhR18}}]\label{def:tesf} 
We say that a function $v\overset{\eqref{sbh0+}}{\in} \sbh^+_0 (D\setminus S)$ is a subharmonic {\it test function\/} on $D$ outward $S$ if the function $v$ is bounded in $D\setminus S$. The class of such functions $w$ will be denoted by $\sbh_0^{+}(D\setminus S; <+\infty)$. For $b\in \RR^+$, put
\begin{equation}\label{<b}
\sbh_0^{+}(D\setminus S; \leq b)\overset{\eqref{sbh0+}}{:=}\Bigl\{v\in \sbh_0^{+}(D\setminus S;<+\infty)\colon \sup_{D\setminus S} v\leq b\Bigr\}.
\end{equation}
Thus, \begin{equation*}
\sbh_0^{+}(D\setminus S; <+\infty)=\bigcup_{b\in \RR^+}\sbh_0^{+}(D\setminus S; \leq b).
\end{equation*}
\end{definition}
The main role will be played by the following 
\begin{theoA}[{\rm \cite[Main Theorem]{KhR18} 
for $\CC$, see also \cite[Main Theorem]{KhT15}, \cite{KhT16}--\cite{KhT17}}]  
Let  $ M\in \dsbh_* (D)$ be a $\delta$-subharmonic function with Riesz charge $\mu_M=\frac{1}{2\pi}\Delta M$, and  
\begin{equation}\label{SD0D+}
\varnothing \neq \Int S\subset S=\clos S \overset{\eqref{SD0D}}{\Subset} D\subset \CC_{\infty}\neq  D.
\end{equation}
Then for any point $z_0\in \Int S$ with $M(z_0)\in  \RR$, any number  $b\overset{\eqref{DS+}}{\in} \RR_*^+$,  any regular for the Dirichlet Problem\/ {\rm \cite[4]{Rans}}  domain $\widetilde{D}\subset \CC_{\infty}$ with the Green function $g_{\widetilde{D}}(\cdot , z_0)$ with a pole at $z_0$ which satisfies the conditions  $S\Subset \widetilde{D}\subset D$  and $\CC_{\infty}\setminus \clos \widetilde{D}\neq \varnothing$, any subharmonic function $u\in \sbh_* (D)$ satisfying the inequality $u\leq M$ on  $D$, and any subharmonic test function  $v\overset{\eqref{<b}}{\in}  \sbh_0^+(D\setminus S;\leq b) $ the following inequality holds:
\begin{equation}\label{mest+}
\widetilde{C} u(z_0) 	+\int_{D\setminus S}  v \,d {\mu}_u 		\leq	\int_{D\setminus S}  v \,d {\mu}_M	+\int_{\widetilde{D}\setminus S} v \,d {\mu}_M^-   +\widetilde{C}\, \overline{C}_M,
\end{equation}
where $\mu_u:=\frac{1}{2\pi}\Delta u$ is the Riesz measure of the function $u$, 
\begin{equation}\label{cz0C}
\widetilde{C}:=\const_{z_0,S,\widetilde{D},b}^+:= \frac{b} {\inf\limits_{z\in \partial S}  g_{\widetilde{D}}(z, z_0)}>0,
\end{equation}
and the value $+\infty$ is possible for the constant 
\begin{equation}\label{CM}
\overline{C}_M:=	\int_{\widetilde{D}\setminus \{z_0\}} g_{\widetilde{D}}(\cdot, z_0)  \dd {\mu}_M  +\int_{\widetilde{D}\setminus S} g_{\widetilde{D}}(\cdot, z_0)  \dd {\mu}_M^-  +M^+(z_0), 
\end{equation}
but for  $\widetilde{D}\Subset D$ this is a certain constant $\overline{C}_M\overset{\eqref{CM}}{=}\const_{z_0,S, \widetilde{D},M,D}^+<+\infty$.
\end{theoA}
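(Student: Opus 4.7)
The plan is to deduce the Main Theorem from Theorem A by a careful choice of subharmonic test function on the annulus $\DD\setminus S$, with $S:=\overline{D}(1/2)$. Take $D:=\DD$, $z_0:=0$, and $\widetilde{D}:=D(3/4)$, so that $S\Subset\widetilde{D}\Subset D$, the Green function is $g_{\widetilde{D}}(z,0) = \log\bigl(3/(4|z|)\bigr)$, and $\overline{C}_M \overset{\eqref{CM}}{<} +\infty$ is a constant depending only on $M$. For any admissible $v\in\sbh_0^+(\DD\setminus S;\leq b)$, Theorem A produces
\[\int_{\DD\setminus S}v\,d\mu_u \;\leq\; \int_{\DD\setminus S}v\,d\mu_M + C_0(\rho,M,u,b).\]
Fubini and \eqref{ntk} rewrite the target $\int_{1/2}^1 g((1-t)/t)\,d\mu^{\rad}(t;h)$ as the Lebesgue integral $\int_{\DD\setminus S}g((1-|z|)/|z|)\,h(\arg z)\,d\mu(z)$, so it remains to exhibit $v$ whose restriction to $\DD\setminus S$ matches $g((1-|z|)/|z|)\,h(\arg z)$ pointwise up to a uniformly bounded additive error.

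The heart of the proof is the construction of such $v$ in product form $v(re^{i\theta}):=V(r)h(\theta)$. By property~(iv) of $\rho$-trigonometric convexity applied in distributional sense to $h$, subharmonicity of the product reduces to the single radial Euler-type inequality
\[V''(r) + V'(r)/r - (\rho^2/r^2)\,V(r) \;\geq\; 0 \quad\text{on }(1/2,1),\qquad V\geq 0.\]
The homogeneous Euler equation has basis $r^{\pm\rho}$, which is exactly the content of property~(v): $r^\rho h(\theta)$ is subharmonic on $\CC$. To construct $V$, represent the convex $g$ via its distributional second derivative $d\nu_g := g''\in\Meas^+(\RR^+)$,
\[g(s) = g'_+(0)\cdot s + \int_0^\infty (s-\tau)^+\,d\nu_g(\tau),\]
where $g'_+(0)$ is the right derivative of $g$ at $0$. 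For each parameter $\tau\in[0,1)$ build a nonnegative solution $V_\tau$ of the Euler inequality vanishing on the circle $r=1/(1+\tau)$ and dominating the profile $((1-r)/r-\tau)^+$ with a universally bounded defect; then superpose against $d\nu_g$ and add the linear piece. This yields $V$ with $g((1-r)/r) \leq V(r) \leq g((1-r)/r) + K_\rho$ on $[1/2,1)$, and the normalizations $g(1)\leq 1$ and $h\leq 1$ give the uniform bound $v \leq b_\rho$ on $\DD\setminus S$.

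Plugging this $v$ into Theorem A, applying Fubini, and using the sandwich $g((1-t)/t) \leq V(t) \leq g((1-t)/t) + K_\rho$ on both sides produces \eqref{uM}; the $K_\rho$-induced additive term is absorbed into $C$ whenever the right-hand side of \eqref{uM} is finite (otherwise \eqref{uM} holds vacuously). The consequence \eqref{uMf} follows by taking $u:=\log|f|\in\sbh_*(\DD)$, so that $\mu_u \overset{\eqref{muf}}{=} n_{\Zero_f} \geq n_{\sf Z}$, and rewriting the left side of \eqref{uM} via \eqref{Ztf} as $\sum_{1/2<r_k<1} g((1-r_k)/r_k)\,h(\theta_k)$.

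The main obstacle is the uniformity in the class [g]--[h] of the defect constant $K_\rho$: the naive choice $V(r):=g((1-r)/r)$ already fails the Euler inequality for moderately large $\rho$ (in the Blaschke case $g(s)=s$, for instance, a direct computation reduces the inequality to $1\geq\rho^2(1-r)$, which breaks near $r=1/2$ once $\rho>\sqrt{2}$). Overcoming this requires exploiting simultaneously the convexity of $g$ and the precise spectral gap $-\rho^2$ in $h''+\rho^2 h\geq 0$, so that the homogeneous solutions $r^{\pm\rho}$ of the Euler operator can absorb the defect while leaving only a $\rho$-dependent additive constant. Once this uniform construction is in place, the remainder of the argument is routine.
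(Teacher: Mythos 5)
You have proved the wrong statement. The statement in question is Theorem A itself --- the inequality \eqref{mest+} with the explicit constants \eqref{cz0C} and \eqref{CM} --- but your argument takes Theorem A as given and uses it to deduce the Main Theorem. Nowhere do you give, or even sketch, an argument for \eqref{mest+}: there is no use of the Riesz--Green representation of $u$ and $M$ in $\widetilde{D}$, no role for the Green function $g_{\widetilde{D}}(\cdot,z_0)$ beyond writing down its formula for $D(3/4)$, no explanation of where the term $\widetilde{C}\,u(z_0)$ or the constant $\overline{C}_M$ comes from, and no treatment of why $\overline{C}_M$ may be $+\infty$ in general but is finite when $\widetilde{D}\Subset D$. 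For the record, the paper does not prove Theorem A either: it is imported verbatim from \cite[Main Theorem]{KhR18} (specialized to $\CC$), and the text only derives its corollary, Theorem B. A genuine proof would have to reproduce the argument of \cite{KhR18}, comparing the Riesz charges of $u$ and $M$ against the test function $v$ via the representation of $u(z_0)$ and $M(z_0)$ in $\widetilde{D}$; none of that appears in your proposal.

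A secondary remark: even read as a proof of the Main Theorem (which is what it actually is), your route is needlessly complicated relative to the paper's. You insist on keeping $S=\overline D(1/2)$ for all $\rho$ and try to repair the failure of subharmonicity of $g\bigl(\frac{1-r}{r}\bigr)h(\theta)$ for $\rho>\sqrt{2}$ by replacing the radial profile with a supersolution $V$ built from $g''$ and the Euler basis $r^{\pm\rho}$, at the cost of a defect constant $K_\rho$ whose uniform control over the class [g]--[h] you yourself flag as the main obstacle and do not carry out. The paper sidesteps this entirely: Proposition \ref{trc-stf} shows via the computation \eqref{posgh} that the unmodified function \eqref{gh} is already a test function once the inner disk is enlarged to $\overline D(r_\rho)$ with $r_\rho\overset{\eqref{rrho}}{=}\max\{1/2,\,1-1/\rho^2\}$, and the contribution of the intermediate annulus $D(r_\rho)\setminus\overline D(1/2)$ is absorbed into the additive constant using only the trivial bound $g\bigl(\frac{1-t}{t}\bigr)h(\theta)\leq 1$ there. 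So the obstacle you identify is illusory for the Main Theorem, and in any case irrelevant to Theorem A, which is what you were asked to prove.
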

We use the following  simplified version of Theorem A.
\begin{theoB} Under the agreements \eqref{SD0D}, and \eqref{SD0D+},
let $M{\in} \dsbh_*(D)$  be a function with Riesz charge $\mu_M\in \Meas(D)$.
Then, for any function $u\in \sbh_*(D)$ with Riesz measure $\mu_u$  satisfying the inequality 
$u \leq M$ on $D$,  we have the inequality 
\begin{equation}\label{mest}
\int_{D\setminus S}  v \dd {\mu}_u 		\leq	\int_{D\setminus S}  v \dd {\mu}_M	+C\quad
\text{for all $v\overset{\eqref{<b}}{\in}  \sbh_0^+(D\setminus S;\leq 1) $}, 
\end{equation}
where   a constant  $C:=\const_{D, S,u,M}^+\in \RR^+$ is independent of\/ $v\overset{\eqref{<b}}{\in}  \sbh_0^+(D\setminus S;\leq 1) $.
\end{theoB}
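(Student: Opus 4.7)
The plan is to specialize Theorem~A to the present setting and collapse every $v$-independent summand into the constant $C$. The uniform bound $\sup_{D\setminus S}v\leq 1$ built into \eqref{<b} corresponds to taking $b=1$ in Theorem~A, so the only real freedom lies in the choice of a base point $z_0$ and an auxiliary regular domain $\widetilde D$.

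First I fix a domain $\widetilde D$, regular for the Dirichlet problem, with $S\Subset\widetilde D\Subset D$; such a $\widetilde D$ exists because $S\Subset D\neq \CC_{\infty}$. Next I pick $z_0\in\Int S$ at which both $M$ and $u$ are finite. This is possible because, writing $M=M_1-M_2$ with $M_1,M_2\in \sbh_*(D)$, the exceptional set of points at which $u=-\infty$ or $M$ fails to be real is contained in the polar set $\{u=-\infty\}\cup\{M_1=-\infty\}\cup\{M_2=-\infty\}$, while $\Int S$ is nonempty and open, hence non-polar. With these choices, the quantities $\widetilde C$ of \eqref{cz0C} and $\overline C_M$ of \eqref{CM} are finite by Theorem~A itself: $\widetilde C<+\infty$ since $\inf_{\partial S} g_{\widetilde D}(\cdot,z_0)>0$ (as $\partial S$ is a compact subset of $\widetilde D$ missing $z_0$), and $\overline C_M<+\infty$ since $\widetilde D\Subset D$.

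The main inequality \eqref{mest+} of Theorem~A then rearranges to
\begin{equation*}
\int_{D\setminus S} v\dd\mu_u \;\leq\; \int_{D\setminus S} v\dd\mu_M + \int_{\widetilde D\setminus S}v\dd\mu_M^- + \widetilde C\,\overline C_M - \widetilde C\,u(z_0).
\end{equation*}
Because $0\leq v\leq 1$ on $D\setminus S$ and $\mu_M^-$ is a finite positive Borel measure on the relatively compact set $\widetilde D\setminus S$, one has $\int_{\widetilde D\setminus S}v\dd\mu_M^-\leq \mu_M^-(\widetilde D\setminus S)<+\infty$, uniformly in $v$. The remaining summands $\widetilde C\,\overline C_M$ and $-\widetilde C\,u(z_0)$ are specific finite real numbers fully determined by the data $D,S,u,M$. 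Setting $C$ to be the positive part of the sum of these three bounds yields \eqref{mest}.

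The argument is essentially bookkeeping; the only genuine obstacle is to ensure that the base point $z_0$ sidesteps the polar exceptional sets of $M$ and $u$ simultaneously, so that the Green-potential constant $\overline C_M$ and the value $u(z_0)$ are both finite, and all residual terms from Theorem~A condense into a single additive constant independent of $v$.
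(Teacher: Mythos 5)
Your proposal is correct and takes essentially the same route as the paper: apply Theorem~A with $b=1$, a base point $z_0\in\Int S$ avoiding the polar exceptional sets of $u$ and $M$, and a regular domain $\widetilde D\Subset D$, then bound $\int_{\widetilde D\setminus S}v\dd\mu_M^-$ by $\mu_M^-(\widetilde D\setminus S)$ using $0\leq v\leq 1$ and absorb all $v$-independent terms into $C$. The paper packages the same bookkeeping by setting $C:=|\widetilde C u(z_0)|+|\mu_M|(\widetilde D\setminus S)+\widetilde C\,\overline C_M$, citing \cite[3.1]{KhR18} for the choice of $z_0$ where you argue directly via non-polarity of open sets.
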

\begin{proof}  There exists always a point $z_0\in \Int S$ and $r_0\in \RR^+_*$ such that \cite[3.1]{KhR18} 
\begin{equation}\label{z0r0}
\begin{split}
D(z_0,r_0)\Subset \Int S, \quad u(z_0)&\neq -\infty, \quad M(z_0)\neq \pm\infty , 
\\ 
\Bigl|\int_{D(z_0,r_0)}\log &|z-z_0| \dd \mu_M\Bigr|<+\infty.
\end{split}
\end{equation}
There is always a regular for the Dirichlet Problem\/  domain $\widetilde{D}$ such that 
$\Int S\Subset \widetilde{D}\Subset D$ {\rm \cite[4]{Rans}}. The choice of such point $z_0$ 
and such domain $\widetilde{D} $ is predetermined solely by sets $S,D$. We choice $b:=1$.
Thus, $\widetilde{C}\overset{\eqref{cz0C}}{=}\const_{D,S}^+\in \RR^+$ is a constant  
depending only on $S$ and $D$. In view of \eqref{z0r0}, by the definition \eqref{CM}, the constant 
$\overline{C}_M\overset{\eqref{CM}, \eqref{z0r0}}{=}\const_{D,S,M}^+\in \RR^+$ is depending only 
on $D,S,M$. Hence the constant 
\begin{equation*}
C\overset{\eqref{mest+}}{:=}|\widetilde{C} u(z_0)|	+ 
|\mu_M|(\widetilde{D}\setminus S)+\widetilde{C}\, \overline{C}_M\\
\geq - \widetilde{C} u(z_0) + 
\int_{\widetilde{D}\setminus S} v \,d {\mu}_M^-   +\widetilde{C}\, \overline{C}_M,
\end{equation*}
depends only on $D,S, u,M$, i.\,e., $C=\const_{D,S,u,M}^+\in \RR^+$. So, \eqref{mest} follows from \eqref{mest+}.
\end{proof}
A method of constructing subharmonic test functions on $\DD$ outward $D(r)$ by means of $\rho$-trigonometrically  convex positive functions is given by the following 
\begin{proposition}\label{trc-stf} Let $h\overset{\eqref{trc+}}{\in} \rtrc^+$
be a $2\pi$-periodic $\rho$-trigonometrically convex positive function, and let $g\colon \RR^+\to \RR^+$ be a convex function with $g(0)=0$. We set 
\begin{equation}\label{rrho}
  \frac{1}{2}\leq r_{\rho}:=\max\Bigl\{\frac{1}{2}, 1-\frac{1}{\rho^2}\Bigr\}<1.
\end{equation} 
Then the function
\begin{equation}\label{gh}
z:=re^{i\theta}\mapsto g\Bigl(\frac{1-r}{r}\Bigr) h(\theta), \quad r\in (0, 1), \; \theta\in \RR,\; z\in \DD\setminus \{0 \},
\end{equation}     
belongs to the class\/  {\rm (see \eqref{<b})}
\begin{equation}\label{hsbh}
\sbh_0^+\bigl(\DD \setminus \overline D(r_{\rho}); \leq b_{\rho}\bigr), \text{ where }b_{\rho}:=g\Bigl(\frac{1-r_{\rho}}{r_\rho}\Bigr)\max_{\theta} h(\theta).
\end{equation}
\end{proposition}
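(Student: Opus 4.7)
The map $v\colon z=re^{i\theta}\mapsto g\bigl((1-r)/r\bigr)h(\theta)$ is nonnegative (both factors are $\geq 0$) and continuous on $\DD\setminus\{0\}$ (continuity of $g$ by convexity; of $h$ by property (\ref{tr:i})). To place $v$ in the class (\ref{hsbh}) I must verify three items: (a) subharmonicity of $v$ on the annulus $\DD\setminus \overline D(r_\rho)$; (b) $\lim_{\partial\DD}v=0$ in the sense of (\ref{dD}); (c) the uniform bound $v\leq b_\rho$ on that annulus.

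Items (b) and (c) are essentially bookkeeping. For (b), as $r\to 1^-$ we have $(1-r)/r\to 0$ and continuity of $g$ at $0$ with $g(0)=0$ gives $g\bigl((1-r)/r\bigr)\to 0$; since $h$ is bounded ($2\pi$-periodic and continuous), $v(z)\to 0$ uniformly as $|z|\to 1$. For (c), convexity of $g$ with $g(0)=0$ and $g\geq 0$ forces $g$ nondecreasing on $\RR^+$, while $r\mapsto (1-r)/r$ is strictly decreasing on $(0,1)$; hence on $\{r_\rho<|z|<1\}$ the factor $g\bigl((1-r)/r\bigr)$ is bounded above by $g\bigl((1-r_\rho)/r_\rho\bigr)$, and multiplying by $\max_\theta h(\theta)$ gives the bound $b_\rho$.

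The content lies in (a). Assuming first that $g\in C^2(\RR_*^+)$ and $h\in C^2(\RR)$, I compute in polar coordinates with $\phi(r):=g\bigl((1-r)/r\bigr)$:
\begin{equation*}
\Delta v \;=\; h(\theta)\Bigl[\phi''(r)+\frac{1}{r}\phi'(r)\Bigr]+\frac{1}{r^2}\phi(r)\,h''(\theta).
\end{equation*}
By property (\ref{tr:iii}), $h''\geq -\rho^2 h$; since $\phi\geq 0$, this gives $\Delta v\geq h(\theta)\bigl[\phi''+\phi'/r-\rho^2\phi/r^2\bigr]$. Writing $s=(1-r)/r$, with $\phi'=-g'(s)/r^2$ and $\phi''=g''(s)/r^4+2g'(s)/r^3$, the bracket times $r^4$ collapses to $g''(s)+rg'(s)-\rho^2 r^2 g(s)$. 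The key inequality $g(s)\leq s\,g'(s)$, a standard consequence of $g(0)=0$ together with convexity, yields $\rho^2 r^2 g(s)\leq \rho^2 r(1-r)g'(s)$, so the bracket is at least $g''(s)+r g'(s)\bigl[1-\rho^2(1-r)\bigr]$. This is $\geq 0$ precisely when $1-\rho^2(1-r)\geq 0$, i.e., $r\geq 1-1/\rho^2$; combined with the additional lower bound $r\geq 1/2$ imposed in (\ref{rrho}), this is exactly the annulus $r>r_\rho$. Hence $\Delta v\geq 0$ there.

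\textbf{Main obstacle.} The only genuine technical point is the regularity gap: $g$ and $h$ are merely convex / trigonometrically convex, so $g''\geq 0$ and $h''+\rho^2 h\geq 0$ hold only distributionally (property (\ref{tr:iv})). I would remove this by standard mollification: convolve $h$ against a smooth nonnegative $2\pi$-periodic approximation to the identity to obtain $h_\varepsilon\in C^\infty(\RR)\cap \rtrc^+$ with $h_\varepsilon\to h$ uniformly (the class $\rtrc^+$ is preserved because convolution commutes with the operator $\partial_\theta^2+\rho^2\cdot$), and similarly smooth $g$ by a convex $g_\varepsilon$ with $g_\varepsilon(0)=0$. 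The resulting $v_\varepsilon$ satisfy $\Delta v_\varepsilon\geq 0$ on the annulus by the $C^2$ computation above, hence are subharmonic there. Uniform convergence $v_\varepsilon\to v$ on compacta then transports subharmonicity to $v$ by passage to the limit in the sub-mean value inequality, completing (a).
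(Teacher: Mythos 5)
Your proof is correct and follows essentially the same route as the paper: reduce to smooth $g$ and $h$, compute the polar Laplacian, and combine $g'(s)\geq g(s)/s$ with $h''+\rho^2h\geq 0$ to get positivity exactly for $r\geq 1-1/\rho^2$. The only (immaterial) difference is the smoothing device — you mollify and pass to a uniform limit, while the paper takes decreasing $C^2$ approximations $g_n\searrow g$, $h_n\searrow h$ and uses that a decreasing limit of positive subharmonic functions is subharmonic.
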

\begin{proof} We use the properties \eqref{tr:i}--\eqref{tr:vii} of $\rho$-trigonometrically convex functions.

There is a decreasing sequence of convex positive functions $g_n \underset{n\to\infty}{\searrow} g$ on $\RR$ such that $g_n(0)=0$ and  $g_n\in C^{2}(\RR^+_*)$, $n\in \NN$. There is also a sequence of 
$2\pi$-periodic $\rho$-trigonometrically convex positive functions $h_n\underset{n\to\infty}{\searrow} h$
(\cite[Proposition 1.4]{Kha91}, \cite[Theorem 51]{GM}) such that $h_n\in C^2(\RR^+_*)$, $n\in \NN$.
The limit of each decreasing sequence of positive subharmonic functions is a subharmonic positive function.
Therefore it suffices to prove the subharmonicity of the function \eqref{gh} on $D$ outward $\overline D(r_{\rho})$ for the case $h\in C^2(\RR)$ and  $g\in C^2(\RR^+_*)$.  The calculation of the Laplace operator of the  function \eqref{gh} in polar coordinates $(r,\theta)$ gives
\begin{multline}\label{Delta}
\Delta \left(g\Bigl(\frac{1-r}{r}\Bigr)h(\theta)\right)\overset{\eqref{gh}}{=}\Bigl(\frac{\partial}{\partial r^2} 
+\frac{1}{r}\frac{\partial}{\partial r}+\frac{1}{r^2}\frac{\partial}{\partial \theta}\Bigr)\bigl(g(1/r-1)h(\theta)\bigr) \\
=\Bigl(g''(1/r-1)\frac{1}{r^4}+\frac{1}{r^3}g'(1/r-1)\Bigr)h(\theta)+\frac{1}{r^2}g(1/r-1)h''(\theta).
\end{multline}
Each {\it convex  function\/ $g \colon \RR^+\to \RR^+$, $g\in C^2(\RR_*^+)$, with\/ $g(0)=0$ }
has the following properties:
\begin{equation}\label{gx}
g''\geq 0, \quad g'(x)\geq \frac{g(x)}{x} \quad\text{\it for all\/ $x\in \RR_*^+$, and $g\in C(\RR^+)$ is increasing.}
\end{equation}
It follows  from  \eqref{Delta}, \eqref{gx}, \eqref{C2tr} that, for $h\in \rtrc^+\cap \, C^2(\RR)$,
\begin{multline}\label{posgh}
\Delta \left(g\Bigl(\frac{1-r}{r}\Bigr)h(\theta)\right)\\
\overset{\eqref{gx}}{\geq} \Bigl(g''(1/r-1)\frac{1}{r^4}+\frac{1}{r^2(1-r)}g(1/r-1)\Bigr)h(\theta)
+\frac{1}{r^2}g(1/r-1)h''(\theta) \\
\overset{\eqref{gx}, \eqref{C2tr}}{\geq}
\frac{1}{r^2}\Bigl(\frac{1}{1-r}-\rho^2\Bigr)g(1/r-1) h(\theta)
\quad\text{for all $r\in \RR_*^{+}$, $\theta \in \RR$.}
\end{multline}
 If $r\geq r_{\rho}$, then the right-hand side of the inequalities 
\eqref{posgh}  is positive. Therefore the function \eqref{gh} is subharmonic on $\DD
\setminus \overline D(r_{\rho})$. Obviously, the function \eqref{gh} is positive, since the functions $h\in \rtrc^+$, 
$g\colon \RR^+\to \RR^+$ are positive, and 
 
\begin{equation}\label{g0}
 g (0) = 0 \quad \Longrightarrow \quad  \lim_{0<x\to 0}g(x)\overset{\eqref{gx}}{=}0 \quad \Longrightarrow \quad \lim_{1>r\to 1} g\Bigl(\frac{1-r}{r}\Bigr)h(\theta)\overset{\eqref{gh}}{=}0.
\end{equation}
Besides, in view of \eqref{gx}, we have 
\begin{equation*}
g\Bigl(\frac{1-r}{r}\Bigr)\max_{\theta}h(\theta)
 \overset{\eqref{rrho}}{\leq} 
g\Bigl(\frac{1-r_{\rho}}{r_{\rho}}\Bigr)\max_{\theta}h(\theta)
\overset{\eqref{hsbh}}{=} b_{\rho} \quad\text{for all $r\in (r_{\rho}, 1)$}.
\end{equation*}
So, by Definition \ref{def:tesf}, in view of \eqref{g0}, the function \eqref{gh} belongs to the class \eqref{hsbh}.  
\end{proof}

\section{Proofs of main results} 

\begin{proof}[of Main Theorem] Let $0\leq \rho \leq \sqrt{2}$. Then $r_{\rho}\overset{\eqref{rrho}}{=}1/2$.  
By Proposition \ref{trc-stf},  the function  \eqref{gh} belong to the class  $\sbh_0^+\bigl(\DD
\setminus \overline D(1/2);\leq 1\bigr)$, since $g(1)\leq 1$ and $\max_{\theta} h(\theta)\leq 1$ under the conditions [g]--[h] of Main Theorem. Hence, by Theorem B, there exists a constant $C=\const_{M,u}^+$ such that the inequality \eqref{mest} holds for any function $v$ of the form \eqref{gh}. Thus, we obtain 
\begin{multline}\label{uM+}
\int_{\DD\setminus \overline D(1/2)} g\Bigl(\frac{1-t}{t}\Bigr) h(\theta)\dd \mu_u(te^{i\theta})
\\
\overset{\eqref{mest}}{\leq} \int_{\DD\setminus \overline D(1/2)} g\Bigl(\frac{1-t}{t}\Bigr) h(\theta)\dd \mu_M(te^{i\theta})+C
\quad\text{\it for any\/ {\rm [g]}--{\rm [h]}}.
\end{multline}
\begin{lemma}[{\rm \cite{GS}, \cite{Kha91}--\cite{Kh12}}]\label{l1} Let $f$ be a continuous function 
on $(r,1)\subset  (0,1)$,  $\mu \in \Meas \DD$. Under the conditions  before \eqref{ntk} we have  the equality
\begin{equation}\label{uM+S}
\int_{\DD\setminus \overline D(r)} f(t) k(\theta)\dd \mu (te^{i\theta})\overset{\eqref{irR}}{=}
\int_{r}^1 f(t) \dd \mu^{\rad}(t;k).
\end{equation}
\end{lemma}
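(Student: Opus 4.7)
The identity is a standard pushforward/Fubini statement, and my plan is to reduce it to the case $f=\mathbf{1}_{(a,b]}$, where it becomes a triviality by definition. First I would introduce, for the bounded Borel weight $k$ and the charge $\mu\in\Meas(\DD)$, the signed Borel measure $\nu_k$ on $[0,1)$ defined on Borel sets $E$ by $\nu_k(E):=\int_{\{z\in\DD\,:\,|z|\in E\}} k(\arg z)\dd\mu(z)$. Boundedness of $k$ together with local finiteness of $|\mu|$ on each set of the form $\{|z|\le s\}$, $s<1$, makes $\nu_k$ a well-defined locally finite charge on $[0,1)$, and the defining equation \eqref{ntk} immediately yields $\nu_k\bigl([0,t]\bigr)=\mu^{\rad}(t;k)$ for every $t\in[0,1)$. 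Thus $\nu_k$ is precisely the Lebesgue--Stieltjes charge associated with the function of locally bounded variation $t\mapsto\mu^{\rad}(t;k)$, so that $\int_r^1 f\dd\mu^{\rad}(\cdot\,;k)=\int_{(r,1)} f\dd\nu_k$ for every $f$ integrable against $|\nu_k|$ on $(r,1)$.

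Next I would verify the pushforward identity
\begin{equation*}
\int_{(r,1)} f\dd\nu_k=\int_{\DD\setminus\overline D(r)} f(|z|)\,k(\arg z)\dd\mu(z),
\end{equation*}
which, written in polar form $z=te^{i\theta}$, is exactly \eqref{uM+S}. For $f=\mathbf{1}_{(a,b]}$ with $r\le a<b<1$, both sides reduce to $\nu_k\bigl((a,b]\bigr)=\mu^{\rad}(b;k)-\mu^{\rad}(a;k)$ directly from the definition of $\nu_k$; by linearity this extends to step functions, and since $f$ is continuous and $|\nu_k|\bigl([r,s]\bigr)<\infty$ for every $s<1$, a uniform approximation on compact subintervals $[r,s]\subset[r,1)$ followed by a monotone limit $s\nearrow 1$ handles a general continuous~$f$, provided the absolute integrals appearing in \eqref{uM+S} are finite (which is the implicit hypothesis under which the equality is to be interpreted).

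The main obstacle is really only bookkeeping: since $\mu$ is signed, one should formally apply the Jordan decomposition $\mu=\mu^+-\mu^-$ and run the argument component by component, and one should verify that the various half-open/closed endpoint conventions (the open annulus $\DD\setminus\overline D(r)$ versus the interval $(r,1)$ versus the closed disks $\overline D(t)$ in \eqref{ntk}) are consistent. Neither issue is substantive: the function $\mu^{\rad}(\cdot\,;k)$ is of locally bounded variation and hence has at most countably many jump points, and since $f$ is continuous, any boundary adjustment contributes zero to either side.
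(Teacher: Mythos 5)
The paper gives no proof of Lemma~\ref{l1} at all---it is stated with only the citations \cite{GS}, \cite{Kha91}--\cite{Kh12}---so there is nothing in the text to compare against; your argument via the pushforward charge $\nu_k(E)=\int_{\{|z|\in E\}}k(\arg z)\dd\mu(z)$, identified with the Lebesgue--Stieltjes charge of the right-continuous function $t\mapsto\mu^{\rad}(t;k)$ and then extended from indicators of half-open intervals to continuous $f$, is the standard one and is correct. The only cosmetic point is that $f$ is continuous only on the open interval $(r,1)$, so the compact exhaustion should use $[r',s]$ with $r<r'<s<1$ and let $r'\searrow r$ as well as $s\nearrow 1$; since neither side of \eqref{uM+S} charges the endpoint $r$, this changes nothing.
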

By Lemma \ref{l1}, we get from \eqref{uM+}  the conclusion  \eqref{uM} of Main Theorem for $\rho\leq \sqrt{2}$.

Consider now the case $\rho >\sqrt{2}$, i.\,e., $r_{\rho}\overset{\eqref{rrho}}{=}1-1/\rho^2>1/2$.  By Proposition \ref{trc-stf},  the function  \eqref{gh} belongs to the class  \eqref{hsbh}, and 
$$\sbh_0^+\bigl(\DD 
\setminus \overline D(r_{\rho});\leq b_{\rho}\bigr)\subset  \sbh_0^+\bigl(\DD
\setminus \overline D(r_{\rho});\leq 1\bigr)$$  
since $g(1)\leq 1$ and $\max_{\theta} h(\theta)\leq 1$ for {[g]}--{[h]}, and
\begin{equation*}
b_{\rho}\overset{ \eqref{hsbh}}{\leq} g\Bigl(\frac{1-(1-\rho^{-2})}{1-\rho^{-2}}\Bigr)\max_{\theta} h(\theta)\overset{\eqref{gx}}{\leq} g(1)\max_{\theta} h(\theta)\leq 1, 
\quad \text{when $\rho>\sqrt{2}$}.
\end{equation*}
Hence, by Theorem B, there is a constant $C'=\const_{S,M,u}^+=\const_{\rho,M,u}^+$ for $S:=\overline D(r_{\rho})$ such that the inequality \eqref{mest} holds for any function $v$ of the form \eqref{gh}. So, we get 
\begin{multline}\label{uM+r}
\int_{\DD\setminus \overline D(r_{\rho})} g\Bigl(\frac{1-t}{t}\Bigr) h(\theta)\dd \mu_u(te^{i\theta})
\\ \overset{\eqref{mest}}{\leq} 
\int_{\DD\setminus \overline D(r_{\rho})} g\Bigl(\frac{1-t}{t}\Bigr) h(\theta)\dd \mu_M(te^{i\theta})+C'
\quad\text{\it for any\/ {\rm [g]}--{\rm [h]}}.
\end{multline}
It is easy to see that there are constants $C'':=\const_{\rho, u}^+$, $C''':=\const_{\rho, M}^+$  such that
\begin{subequations}
\begin{align*}
\int_{D(r_{\rho})\setminus \overline D(1/2)} 
g\Bigl(\frac{1-t}{t}\Bigr) h(\theta)\dd \mu_u(te^{i\theta})\leq \mu_u\bigl(D(r_{\rho})\setminus \overline D(1/2)\bigr)&\leq C'', \\
\Bigl|\int_{D(r_{\rho})\setminus \overline D(1/2)} 
g\Bigl(\frac{1-t}{t}\Bigr) h(\theta)\dd \mu_M(te^{i\theta})\Bigr|\leq |\mu_M|\bigl(D(r_{\rho})\setminus \overline D(1/2)\bigr)&\leq C''',
\end{align*}
\end{subequations}
Hence, in view of \eqref{uM+r}, we obtain \eqref{uM+} with $C:=C'+C''+C'''= \const_{\rho,M,u}$
for any\/ {\rm [g]}--{\rm [h]}. 
By Lemma 1, we again obtain from \eqref{uM+}  the conclusion  \eqref{uM} of Main Theorem already for the case $\rho>\sqrt{2}$.

Let\/ ${\sf Z}$ be a sequence   from\/ \eqref{dfZ} with $\mathcal O:=\DD$, and let 
$f\in \Hol_*(\DD)$ be a function  that vanishes on the sequence $\sf Z\subset \Zero_f$ 
and satisfies the inequality $u:=\log |f|\leq M$.\/ By  the conclusion  \eqref{uM} of Main Theorem, there exists a constant $C:=\const_{\rho,M,f}^+$ such that we have  \eqref{uM} for $u:=\log |f|$. 
Here the choice of the function  $f$ is predetermined solely by the sequence $\sf Z$ and function $M$.\/  So, $C=\const_{\rho,M,{\sf Z}}^+$, and, in view of  the (in)equalities
\begin{multline*}
\sum_{1/2< r_k<1} g\Bigl(\frac{1-r_k}{r_k}\Bigr) h(\theta_k)
\overset{\eqref{Ztf}}{=}\int_{1/2}^{1} g\Bigl(\frac{1-t}{t}\Bigr) \dd n_{\sf Z}^{\rad} (t;h)
\\
\leq \int_{1/2}^{1} g\Bigl(\frac{1-t}{t}\Bigr) \dd n_{\Zero_f}^{\rad} (t;h)
\overset{\eqref{muf}}{=} \int_{1/2}^{1} g\Bigl(\frac{1-t}{t}\Bigr) \dd \mu_{u}^{\rad} (t;h)
\quad \text{for\/ $u:=\log|f|$}, 
\end{multline*}
the inequality \eqref{uMf} follows from \eqref{uM}. 
\end{proof}

\begin{proof}[of Uniqueness Theorem] Without loss of generality, we can assume that $h\not\equiv  0$, i.\,e., $h_0:=\max_{\theta}h(\theta)>0$,  and  $g(1)>0$. 
If $f\in \Hol_*(\DD)$, i.\,e., $f\neq 0$, $f({\sf Z})=0$, and $|f|\leq \exp M$ on $\DD$, then, by 
Main Theorem, we have 
\begin{multline*}\label{uM2}
\frac{1}{g(1)h_0}\sum_{1/2< r_k<1} g(1-r_k) h(\theta_k) 
\leq 
\sum_{1/2< r_k<1} \frac{1}{g(1)}g\Bigl(\frac{1-r_k}{r_k}\Bigr) \frac{1}{h_0}h(\theta_k) 
\\
\overset{\eqref{uMf}}{\leq} 
\int_{1/2}^{1} \frac{1}{g(1)}g\Bigl(\frac{1-t}{t}\Bigr) \dd \mu^{\rad}_M(t;h/h_0)
\\
\leq \frac{1}{g(1)h_0}\int_{1/2}^{1} g\bigl(2(1-t)\bigr) \dd \mu^{\rad}_M(t;h)\overset{\eqref{cuM}}{<}+\infty.
\end{multline*}
So, if $f\neq 0$, then the latter contradicts the condition \eqref{cuZ}.
\end{proof}

\begin{acknowledgements}
The authors thank the organizers of International Conferences 
``Complex Analysis and Related Topics 2018'' (April 23--27, 2018, Euler International Mathematical Institute, St. Petersburg, Russia) and ``XXVII St.Petersburg Summer Meeting in Mathematical Analysis''
(August 6--11, 2018, St. Petersburg, Russia) for the invitation and for the opportunity to report the results related to the content of this article.
\end{acknowledgements}



\end{document}